\DeclareMathOperator{\Sk}{Sk}
 \DeclareMathOperator{\D}{D}
 \DeclareMathOperator{\diag}{diag}
 \DeclareMathOperator{\id}{id}
  \DeclareMathOperator{\Mod}{Mod}
\newcommand{\lra}{\longrightarrow}
\newcommand{\ra}{\rightarrow}
\newcommand{\w}{\wedge}
\newcommand{\bsup}{\mathbin{\dot{\vee}}}
\newcommand{\N}{\mathbb{N}}
\newcommand{\Gn}{\mathbf{G}}
\begin{document}

\title{The countable existentially closed pseudocomplemented
semilattice
}


\author{Jo\"el Adler         
}


\institute{J. Adler \at
              P\"adagogische Hochschule Bern, Fabrikstrasse 8, 3012 Bern, Switzerland \\
              Tel.: +41-31-3092452\\
              Fax: +41-31-3092411\\
              \email{joel.adler@phbern.ch}           
}

\date{Received: date / Accepted: date}

\maketitle

\begin{abstract}
As the class $\mathcal{PCSL}$ of pseudocomplemented semilattices is a universal Horn class generated by a single finite structure it has a $\aleph_0$-categorical model companion $\mathcal{PCSL}^*$. As $\mathcal{PCSL}$ is inductive the models of $\mathcal{PCSL}^*$ are exactly the existentially closed models of $\mathcal{PCSL}$. We will construct the unique existentially closed countable model of $\mathcal{PCSL}$ as a direct limit of algebraically closed pseudocomplemented semilattices.
\keywords{Existentially closed \and Pseudocomplemented semilattice \and Model companion \and $\aleph_0$-categoricity}
\subclass{03C05 \and 03C10 \and 06D15}
\end{abstract}

\section{Introduction}\label{sec_introduction}
For a first-order language $\mathcal{L}$ and an $\mathcal{L}$-structure $\mathbf{M}$ with universe $M$ the language $\mathcal{L}(M)$ is obtained by adding a constant symbol for every $m\in M$.
To define the notion of model companion we first have to define the notion of model completeness. An $\mathcal{L}$-theory $T$ is said to be {\it model complete} if for every model $\mathbf{M}$ of $T$
the set of $\mathcal{L}$-sentences $T\cup\diag(\mathbf{M})$ is complete, where $\diag(\mathbf{M})$ is the set of atomic and negated atomic $\mathcal{L}(M)$-sentences that hold in $\mathbf{M}$.
$T^*$ is said to be a {\it model companion} of $T$ if (i) every model of $T^*$ is embeddable in a model of $T$ and vice versa and (ii) $T^*$ is model complete.

A theory $T$ need not have a model companion as is the case for the theory of groups and theory of commutative rings, see Wheeler \cite{Whe}. However, if $T$ is a set of Horn sentences and the
class $\Mod(T)$ of its models is finitely generated then $T$ has a model companion $T^*$ as was shown by Burris and Werner \cite{BuWe}.


If $T$ is additionally inductive---that is $\Mod(T)$ is closed under the union of chains---then we have the characterization $\Mod(T^*)=\Mod(T)^{ec}$, that is, the models of $T^*$ are the
existentially closed models of $T$, see Macintyre \cite{Mac}. A definition of the notions of algebraically and existentially closed can be found in \cite{Ad}. Finally, if $\Mod(T)$ is generated by a single finite structure then $\Mod(T^*)$ is $\aleph_0$-categorical, see Burris \cite{Bu}.

Horn and Balbes \cite{BaHo} proved that $\mathcal{PCSL}$ is equational, Jones  \cite{Jo} showed that it is generated by a single finite
structure. Thus $\mathcal{PCSL}^*$ is $\aleph_0$-categorical and its only countable model is the countable existentially closed pseudocomplemented semilattice.

In Section \ref{sec_preliminaries} we provide the basic properties and algebraic notions concerning pseudocomplemented semilattices (p-semilattices for short), while in Section \ref{sec_construction} the countable existentially closed p-semilattice is constructed.
\section{Basic properties of pseudo\-com\-ple\-men\-ted semi\-lattices and notation}\label{sec_preliminaries}

A p-semilattice $\langle P;\wedge,^*,0\rangle$ is an algebra where $\langle P;\wedge\rangle$ is a meet-semilattice  with least element $0$, and for all $x,y\in P$,
 $x\wedge y = 0$ if and only if $y\leq x^*$ ($x\leq y$ is defined to hold for $x,y\in P$ if $x\wedge y=x$).

From the above definition, it follows that in a p-semilattice the following properties hold:
\begin{align}\label{implication_decreasing}
x\leq y &\implies y^*\leq x^*\\
\label{inequality_double_complement}
x&\leq x^{**}\\
\label{equation_three_stars}
x^*&= x^{***}
\end{align}

$1:=0^*$ is obviously the greatest element of $P$. $x\parallel y$ is defined to hold if neither $x\leq y$ nor $y\leq x$. An element $d$ of $P$ satisfying $d^*=0$ is called {\it dense}, and if
additionally $d\not=1$ holds, then $d$ is called a {\it proper dense} element. For $\mathbf{P}\in\mathcal{PCSL}$ the set $\D(\mathbf{P})$ denotes the subset of dense elements of $\mathbf{P}$, $\langle \D(\mathbf{P});\wedge\rangle$ being a filter of $\langle
P;\wedge\rangle$. An element $s$ is called {\it skeletal} if $s^{**}=s$. The subset of skeletal elements of $\mathbf{P}$ is denoted by $\Sk(\mathbf{P})$. The abuse of notation $\Sk(x)$ for $x\in\Sk(\mathbf{P})$ and $\D(x)$ for $x\in\D(\mathbf{P})$ should not cause ambiguities. Equation \eqref{equation_three_stars} implies $\Sk(\mathbf{P})=\{x^* \colon  x\in P\}$.
In
$\Sk(\mathbf{P})$  the supremum of two elements exists with $\sup_{\Sk}\{a,b\}=(a^*\wedge b^*)^*$ for $a,b\in\Sk(\mathbf{P})$. Instead of $\sup_{\Sk}\{a,b\}$ we use the shorter $a\dot{\vee} b$, assuming $a,b\in\Sk(\mathbf{P})$, which follows from \eqref{implication_decreasing} and \eqref{inequality_double_complement}. Observe that $\langle\Sk(\mathbf{P});\w,\dot{\vee},^*,0,1\rangle$ is a boolean algebra.

\smallskip

For any p-semilattice $\mathbf{P}$ the p-semilattice $\widehat{\mathbf{P}}$ is obtained from $\mathbf{P}$ by adding a new top element. The maximal proper dense element of $\widehat{\mathbf{P}}$ is denoted by $e$. Furthermore, the p-semilattices $\widehat{\mathbf{B}}$ with $\mathbf{B}$ being a boolean algebra interpreted as a p-semilattice are exactly the subdirectly irreducible p-semilattices.
Moreover, let $\mathbf{2}$ denote the two-element boolean
algebra and $\mathbf{A}$ the countable atomfree boolean algebra.

An equational set $\Sigma$ of axioms for $\mathcal{PCSL}$ can be found in \cite{Ad}, for more background on p-semilattices in general consult Frink \cite{Fr} and \cite{Jo}.

\smallskip

%
\smallskip

\smallskip


\smallskip

In Schmid \cite{Sc3} the following characterization of algebraically
closed p-semi\-lattices is established:

\begin{theorem}\label{thm Schmid}
A p-semilattice $\mathbf{P}$ is algebraically closed if and only if for any finite
subalgebra $\mathbf{F}\leq \mathbf{P}$ there exists $r,s\in \N$ and a p-semilattice $\mathbf{F'}$ isomorphic to
$\mathbf{2}^r\times(\widehat{\mathbf{A}})^s$
such that $\mathbf{F}\leq \mathbf{F'}\leq \mathbf{P}$.
\end{theorem}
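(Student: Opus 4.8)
The plan is to prove the two implications separately, with the sufficiency direction resting on a lemma about the building blocks $\mathbf{2}^r\times(\widehat{\mathbf{A}})^s$.

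For sufficiency, I would first reduce to the claim that each $\mathbf{N}=\mathbf{2}^r\times(\widehat{\mathbf{A}})^s$ is itself algebraically closed. Since $\mathcal{PCSL}$ is locally finite (it is generated by a finite algebra), every finite set of parameters of $\mathbf{P}$ lies in a finite subalgebra $\mathbf{F}\leq\mathbf{P}$; by hypothesis there is $\mathbf{F}'\cong\mathbf{2}^r\times(\widehat{\mathbf{A}})^s$ with $\mathbf{F}\leq\mathbf{F}'\leq\mathbf{P}$, so a finite system of equations over $\mathbf{F}$ solvable in some extension of $\mathbf{P}$ is solvable in an extension of $\mathbf{F}'$, hence in $\mathbf{F}'$ once the latter is known to be algebraically closed, hence in $\mathbf{P}$. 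Note that no directedness of the family of the $\mathbf{F}'$ is needed for this argument.

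To see that $\mathbf{N}=\mathbf{2}^r\times(\widehat{\mathbf{A}})^s$ is algebraically closed it suffices to show that for every finite $\mathbf{F}\leq\mathbf{N}$ and every finite $\mathbf{G}\in\mathcal{PCSL}$ with $\mathbf{F}\leq\mathbf{G}$, the inclusion $\mathbf{F}\hookrightarrow\mathbf{N}$ extends to a homomorphism $\mathbf{G}\to\mathbf{N}$ (solving a finite system of equations over $\mathbf{F}$ only requires a homomorphism into $\mathbf{N}$ from the algebra generated by a hypothetical solution, not an embedding). I would do this coordinatewise: a homomorphism $\mathbf{G}\to\mathbf{N}$ amounts to $r+s$ homomorphisms into the factors. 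Into a factor $\mathbf{2}$: every homomorphism into $\mathbf{2}$ factors through the Glivenko homomorphism $x\mapsto x^{**}$ onto the Boolean algebra $\Sk(\mathbf{G})$, hence corresponds to an ultrafilter of $\Sk(\mathbf{G})$; as $\Sk(\mathbf{F})$ is a subalgebra of $\Sk(\mathbf{G})$ and ultrafilters of Boolean algebras extend, the given homomorphism $\mathbf{F}\to\mathbf{2}$ extends to $\mathbf{G}$. Into a factor $\widehat{\mathbf{A}}$: using that every finite subalgebra of $\widehat{\mathbf{A}}$ is isomorphic to some $\widehat{\mathbf{2}^m}$, that $\widehat{\mathbf{A}}$ is homogeneous, and the atomlessness and universality of $\mathbf{A}$ among countable Boolean algebras, one writes the homomorphism as $\mathbf{F}\twoheadrightarrow\mathbf{F}/\varphi\cong\widehat{\mathbf{2}^m}\hookrightarrow\widehat{\mathbf{A}}$, extends the congruence $\varphi$ to a congruence $\theta$ of $\mathbf{G}$ with $\mathbf{G}/\theta$ again of the form $\widehat{\mathbf{2}^{m'}}$, and then realizes the larger quotient inside $\widehat{\mathbf{A}}$ compatibly by homogeneity. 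This congruence-extension step is the technical core of the sufficiency direction.

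For necessity, let $\mathbf{P}$ be algebraically closed and $\mathbf{F}\leq\mathbf{P}$ finite. From the finite subdirect decomposition of $\mathbf{F}$ I would read off $r$ (the number of two-element subdirect factors) and $s$ (the number of factors $\widehat{\mathbf{2}^{n}}$ with $n\geq 1$), so that $\mathbf{F}$ embeds into $\mathbf{N}:=\mathbf{2}^r\times(\widehat{\mathbf{A}})^s$. The plan is to build a copy of $\mathbf{N}$ inside $\mathbf{P}$ above $\mathbf{F}$ by a recursive construction: enumerate the countably many elements of $\mathbf{N}$ and at stage $n$ pass from a finite subalgebra $\mathbf{F}_n\leq\mathbf{P}$, isomorphic over $\mathbf{F}$ to an approximating subalgebra $\mathbf{G}_n\leq\mathbf{N}$ (with $\mathbf{F}_0=\mathbf{F}$), to $\mathbf{F}_{n+1}\leq\mathbf{P}$ realizing the next approximation $\mathbf{G}_{n+1}\geq\mathbf{G}_n$. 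To carry out a step one writes the finite system of equations over $\mathbf{F}_n$ defining the new generators of $\mathbf{G}_{n+1}$ and their relations to $\mathbf{F}_n$; algebraic closedness supplies a solution in $\mathbf{P}$, provided the system is solvable in some extension of $\mathbf{P}$ — and such an extension is obtained by choosing a subdirect representation of $\mathbf{P}$ into a product of algebras $\widehat{\mathbf{B}_j}$ and enlarging each $\mathbf{B}_j$ to an atomless Boolean algebra containing it, which creates room for the needed new skeletal and dense elements. Then $\mathbf{F}':=\bigcup_n\mathbf{F}_n$ is a subalgebra of $\mathbf{P}$ isomorphic to $\mathbf{N}$ and containing $\mathbf{F}$.

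The main obstacle, as I see it, is in the necessity direction: being algebraically closed only yields a homomorphic solution of the defining system, not an embedded copy of $\mathbf{G}_{n+1}$, so the chain $(\mathbf{G}_n)$ and the defining equations must be arranged so that each newly adjoined element is pinned down uniquely by equations over the part already built — relative complements in the Boolean skeleton and meets and joins with the essentially two-element dense parts of the $\widehat{\mathbf{A}}$-factors are unique once they exist, and it is precisely the rigid shape of $\mathbf{2}^r\times(\widehat{\mathbf{A}})^s$ (skeleton atomless apart from the $r$ fixed coordinate atoms, two dense elements per $\widehat{\mathbf{A}}$-factor) that makes this achievable. Executing this bookkeeping while simultaneously verifying solvability in an extension at each step is where the bulk of the argument lies; the structure theory recalled in Section~\ref{sec_preliminaries} — the description of the subdirectly irreducibles, the skeleton/dense decomposition, and the Glivenko homomorphism — is what keeps it tractable.
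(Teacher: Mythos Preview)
The paper does not prove this theorem at all: it is quoted from Schmid~\cite{Sc3} as background (``In Schmid~\cite{Sc3} the following characterization \ldots\ is established''), and no argument is supplied. There is therefore no in-paper proof for your proposal to be compared against.

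That said, a brief remark on your outline as a standalone attempt. The sufficiency direction is sound in structure: the reduction to showing that each $\mathbf{2}^r\times(\widehat{\mathbf{A}})^s$ is algebraically closed is legitimate (local finiteness of $\mathcal{PCSL}$, needed so that finitely many parameters generate a finite subalgebra $\mathbf{F}$, is indeed a known fact due to Jones), and the coordinatewise extension argument via Glivenko plus ultrafilter extension for the $\mathbf{2}$-factors and congruence extension for the $\widehat{\mathbf{A}}$-factors is the right idea. For necessity your plan is also the natural one, and you correctly identify the real difficulty: algebraic closedness gives you solutions to positive systems, not embeddings, so at each stage you must encode enough of the diagram of $\mathbf{G}_{n+1}$ over $\mathbf{G}_n$ by equations alone that any solution in $\mathbf{P}$ is automatically faithful. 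Your remark that relative complements in the skeleton and the two-element dense parts are determined by equations once they exist is the key observation that makes this work; carrying it out rigorously is essentially what Schmid does. So your sketch is a plausible reconstruction of the original argument, but it is not something this paper itself contains.
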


In \cite{Ad} the following (syntactic) description of existentially closed
p-semi\-lattices is given:

\begin{theorem}\label{thm Adler}
A p-semilattice $\mathbf{P}$ is existentially closed if and only if $\mathbf{P}$ is algebraically
closed and satisfies the following list of axioms:
\setenumerate{label=(EC\theenumi)}
\begin{enumerate}
\item\label{EC1}
\begin{equation*}(\forall b_1,b_2\in \Sk(\mathbf{P}))(\exists
b_3\in \Sk(\mathbf{P}))(b_1<b_2\lra b_1<b_3<b_2),
\end{equation*}
%
\item\label{EC2}
%
\begin{multline*}(\forall b_1,b_2\in \Sk(\mathbf{P}),d\in \D(\mathbf{P}))(\exists
b_3\in \Sk(\mathbf{P}))(\\(b_1\leq b_2<d<1\And b_1^*\parallel d)\\
\lra(b_2<b_3<1\And b_1^*\w b_3\parallel d\And
b_1\,\dot{\vee}\,b_3^*<d)),
\end{multline*}
\item\label{EC3}
\[(\forall b\in\Sk(\mathbf{P})\setminus\{1\})(\exists d\in \D(\mathbf{P}))(b<d\And b^*\parallel d),
\]
\item\label{EC4}
\begin{equation*}
  (\forall d_1,d_2\in \D(\mathbf{P}))(\exists d_3\in P)(d_1<d_2\lra
   (d_1<d_3<d_2)),
\end{equation*}

\item\label{EC5}
\begin{multline*}(\forall b\in \Sk(\mathbf{P}),d_1\in \D(\mathbf{P}))(\exists
d_2\in \D(\mathbf{P}))(0<b<d_1
\\ \lra
(d_2<d_1\And  b\parallel d_2\And d_1\w b^*=d_2\w b^*)).
\end{multline*}
\end{enumerate}
\end{theorem}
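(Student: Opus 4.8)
The plan is to prove the two implications separately; the forward one is routine and the converse carries the weight. Suppose first that $\mathbf{P}$ is existentially closed. Then it is in particular algebraically closed, so it remains to check (EC1)--(EC5). Each $(\mathrm{ECi})$ has the shape $(\forall\,\overline{u})(\exists\,v)\bigl(\varphi(\overline{u})\leftrightarrow\psi(\overline{u},v)\bigr)$ with $\varphi,\psi$ quantifier-free. Fix parameters $\overline{u}$ in $\mathbf{P}$. If $\varphi(\overline{u})$ fails, a witness $v$ making $\psi(\overline{u},v)$ false is written down directly in $\mathbf{P}$ (for instance $b_3:=b_1$ in (EC1), so that $b_1<b_3$ fails). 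If $\varphi(\overline{u})$ holds, the idea is to exhibit an extension $\mathbf{P}\leq\mathbf{Q}$ in $\mathcal{PCSL}$ carrying a witness $v$ with $\psi(\overline{u},v)$: for (EC3) take $\mathbf{Q}=\widehat{\mathbf{P}}$ and $v=e$; for (EC1), resp. (EC4), adjoin a new skeletal element splitting $b_2\wedge b_1^{*}$, resp. a new dense element strictly between $d_1$ and $d_2$; for (EC2) and (EC5) carry out the analogous but more intricate amalgamations. Since $\exists v\,\psi(\overline{u},v)$ is an existential sentence with parameters in $P$ true in $\mathbf{Q}$, existential closedness transfers it to $\mathbf{P}$. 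What must be checked here is that these prescriptions really yield p-semilattices extending $\mathbf{P}$ in which $\psi$ holds of the new element; alternatively, by $\aleph_0$-categoricity of $\mathcal{PCSL}^{*}$ together with the fact that (EC1)--(EC5) are first-order (the sets $\Sk$ and $\D$ being definable), it would suffice to verify them in one existentially closed p-semilattice.

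For the converse, assume $\mathbf{P}$ is algebraically closed and satisfies (EC1)--(EC5), let $\mathbf{P}\leq\mathbf{Q}$, and let a finite conjunction $S$ of atomic and negated atomic formulas with parameters $\overline{a}\in P$ have a solution $\overline{b}$ in $\mathbf{Q}$; we must already solve $S$ in $\mathbf{P}$. Since $\mathcal{PCSL}$ is generated by a finite structure it is locally finite, so $\mathbf{F}:=\langle\overline{a}\rangle\leq\mathbf{P}$ and $\mathbf{G}:=\langle\overline{a},\overline{b}\rangle\leq\mathbf{Q}$ are finite with $\mathbf{F}\leq\mathbf{G}$, and it suffices to embed $\mathbf{G}$ into $\mathbf{P}$ over $\mathbf{F}$. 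Adjoining the generators of $\mathbf{G}$ over $\mathbf{F}$ one at a time reduces this to a one-point embedding lemma: \emph{for every finite $\mathbf{F}'\leq\mathbf{P}$ and every one-point extension $\mathbf{F}'\leq\langle\mathbf{F}',c\rangle$ there is an embedding $\langle\mathbf{F}',c\rangle\hookrightarrow\mathbf{P}$ fixing $\mathbf{F}'$ pointwise.}

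To prove the lemma I would first classify $\langle\mathbf{F}',c\rangle$ up to isomorphism over $\mathbf{F}'$. Since the Glivenko homomorphism $x\mapsto x^{**}$ maps $\langle\mathbf{F}',c\rangle$ onto its skeleton, which is the Boolean algebra generated by $\Sk(\mathbf{F}')\cup\{c^{**}\}$, and since one may split the extension as $\mathbf{F}'\leq\langle\mathbf{F}',c^{**}\rangle\leq\langle\mathbf{F}',c\rangle$, it is enough to treat the cases where $c$ is skeletal or where $c^{**}\in\mathbf{F}'$; the isomorphism type over $\mathbf{F}'$ is then determined by the Boolean extension, the position of $c$ below $c^{**}$, and the meets $c\wedge a$ for $a\in F'$ relative to the dense filter, leaving a short list of primitive one-point extensions. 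Now fix, via Theorem \ref{thm Schmid}, some $\mathbf{F}'\leq\mathbf{F}''\leq\mathbf{P}$ with $\mathbf{F}''\cong\mathbf{2}^{r}\times(\widehat{\mathbf{A}})^{s}$. The primitive extensions that embed over $\mathbf{F}'$ into the $\widehat{\mathbf{A}}$-part of $\mathbf{F}''$ are disposed of by algebraic closedness alone; the remaining ones — splitting an atom of the $\mathbf{2}^{r}$-part of $\Sk(\mathbf{F}'')$, producing a proper dense element, refining the dense filter, or realizing a skeletal--dense coupling — are handled by applying (EC1), (EC3), (EC4), and (EC2)/(EC5) to suitable parameters from $\mathbf{F}''$, the resulting element then generating with $\mathbf{F}'$ a subalgebra of $\mathbf{P}$ isomorphic to $\langle\mathbf{F}',c\rangle$ over $\mathbf{F}'$.

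The main obstacle is exactly this case analysis: enumerating the finitely many primitive one-point extensions, matching each either to $\mathbf{2}^{r}\times(\widehat{\mathbf{A}})^{s}$ or to one of the five axioms, and verifying that the witness delivered by the biconditional form of $(\mathrm{ECi})$ is generic enough over $\mathbf{F}'$ that the resulting map is an embedding and not merely a homomorphism — in particular that the somewhat technical extra clauses in (EC2) and (EC5) are precisely what is needed to pin down the mixed types. A lesser but genuine point is the bookkeeping in the reduction to one-point extensions, i.e.\ adding the generators one at a time while keeping every intermediate algebra finite and realized inside $\mathbf{P}$.
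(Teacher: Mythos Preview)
The paper does not contain a proof of this theorem. It is quoted verbatim from \cite{Ad} and used as a black box in the construction of Section~\ref{sec_construction}; the only argument in the paper is the verification that the direct limit $\mathbf{G}$ satisfies (EC1)--(EC5), not that these axioms characterise existential closedness. So there is nothing to compare your proposal against here.

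That said, your outline is the natural one and matches the strategy one expects from \cite{Ad}: reduce existential closedness over a finite parameter set to a one-point extension problem via local finiteness, normalise the ambient finite subalgebra using Theorem~\ref{thm Schmid}, and then match each primitive one-point extension either to the internal structure of $\mathbf{2}^{r}\times(\widehat{\mathbf{A}})^{s}$ or to one of the five axioms. Two points deserve care. First, the step ``adjoin the generators of $\mathbf{G}$ over $\mathbf{F}$ one at a time'' is not quite a reduction to a single one-point lemma applied to $\mathbf{F}'$: after you realise $c_1$ inside $\mathbf{P}$, the next one-point extension must be taken over $\langle\mathbf{F}',c_1\rangle$, and you need the realisation of $c_1$ to be compatible with the isomorphism type of $\langle\mathbf{F}',c_1,c_2\rangle$ over $\langle\mathbf{F}',c_1\rangle$; a clean way around this is to absorb all of $\mathbf{G}$ into a single $\mathbf{2}^{r}\times(\widehat{\mathbf{A}})^{s}$ first and then embed that product into $\mathbf{P}$ over $\mathbf{F}$, rather than literally iterating one-point extensions. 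Second, your classification of primitive one-point extensions and the matching to (EC2) and (EC5) is asserted rather than carried out; this is exactly where the work lies, and the biconditional clauses in (EC2) and (EC5) are tailored to specific configurations whose enumeration you have not given. Until that case analysis is written down, the converse direction remains a plan rather than a proof.
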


Constructing the unique countable model of the model companion $\Sigma^*$ of $\mathcal{PCSL}$ thus amounts to constructing a countable algebraically  closed p-semilattice that satisfies \ref{EC1}--\ref{EC5}.

\section{The construction}\label{sec_construction}

As the objects of the direct limit we are going to construct we take $\{\mathbf{G}_n\colon n\in\N\setminus\{0\}\}$, where $\mathbf{G}_n:=(\widehat{\mathbf{A}})^n$. In view of Theorem \ref{thm Schmid} $\Gn_n$ is algebraically closed for all $n\in\N\setminus\{0\}$. We have to define embeddings $f_n:\Gn_n\ra\Gn_{n+1}$ for $n\geq 1$ such that the direct limit of the directed family $\{\langle
\mathbf{G}_m,g_{m,n}\rangle\colon m,n\in\N, 1\leq m\leq n\}$ where $g_{i,j}:=f_{j-1}\circ\cdots\circ f_{i}$ for
$i<j$ and $g_{i,i}=\id_{G_i}$ additionally satisfies \ref{EC1}--\ref{EC5} of Theorem \ref{thm Adler}. The elements of the direct limit are defined by considering an equivalence relation on $\bigcup_{i=1}^\infty G_i$. $(x_1,\ldots,x_m)$ and $(y_1,\ldots,y_n)$ are in the same equivalence class if $m=n$ or $m<n$ and $g_{m,n}(x_1,\ldots,x_m)=(y_1,\ldots,y_n)$.

We will show that \ref{EC1}--\ref{EC5} hold if
we have the following properties:
\setenumerate{label=(P\theenumi)}
\begin{enumerate}\label{enum1}
\item\label{enum11} For every anti-atom $d$ of $\D(\mathbf{G}_n)$ there is a $k\in\N$ such that $g_{n,n+k}(d)$ is not an anti-atom of $\mathbf{G}_{n+k}$ anymore. This will imply that the order restricted to the dense elements of the direct limit is dense.
  \item\label{enum13} For every $a\in \Sk(\mathbf{G}_n)\setminus\{0,1\}$ there exists $k\in\N$ such that  $\pi_{n+k}(g_{n,n+k}(a))=1$.
\end{enumerate}
To define the embeddings $f_n$ we consider the following functions.
\setenumerate{label=(\theenumi)}
\begin{enumerate}
  \item Let $r\colon\N\setminus\{0\}\to\N\setminus\{0\}$ be such that
  \begin{itemize}\renewcommand{\labelitemi}{$\bullet$}
    \item $r(i)\leq 2i$ for all $i$,
    \item $r^{-1}(j)$ is infinite for all $j$.
  \end{itemize}
  \item Let $s\colon\N\setminus\{0\}\to(\widehat{A}\setminus\{0,e,1\})\times(\N\setminus\{0\})$ be such that
  \begin{itemize}\renewcommand{\labelitemi}{$\bullet$}
    \item $\pi_2((s(i))\leq 2i+1$ for all $i$,
    \item $s^{-1}(a,j)$ is infinite for all $(a,j)$.
  \end{itemize}
    \item Let $U$ be a function with domain $\widehat{A}\setminus\{0,e,1\}$ which assigns to each $\widehat{A}\setminus\{0,e,1\}$ an ultrafilter $U_a$ of $A$ containing $a$.

\end{enumerate}
Now, for each $n\in\N\setminus\{0\}$ we define an embedding $f_n\colon \mathbf{G}_n\to \mathbf{G}_{n+1}$ such that \ref{enum11} and \ref{enum13} hold by

\vspace{0.2cm}
\noindent
$f_n(x_1,\ldots,x_n)=(x_1,\ldots,x_n,x_{r(i)})$ \hfill if $n=2i$\\
$f_n(x_1,\ldots,x_n)=\begin{cases}(x_1,\ldots,x_n,1)&\text{if }x_{\pi_2(s(i))}\in U_{\pi_1(s(i))}\cup \{e\}\\
                        (x_1,\ldots,x_n,0)&\text{otherwise}
                     \end{cases}$
\hfill if $n=2i+1$

\vspace{0.2cm}
For the verification of \ref{enum11} consider an anti-atom $d=(d_1,\ldots,d_n)$ of $\D(\mathbf{G}_n)$. Let $j\in\{1,\ldots,n\}$ be such that $d_j=e$. By the infinity of $r^{-1}(j)$ there is $k>n/2$, $k\in\N$, such that $r(k)=j$. Then, setting $(d_1,\ldots,d_{2k})=g_{n,2k}(d)$,
\begin{align*}
g_{n,2k+1}(d)&=f_{2k}\circ g_{n,2k}(d)\\
&= f_{2k}(d_1,\ldots,d_{2k})\\
&= (d_1,\ldots,d_{2k},d_{r(k)})\\
&= (d_1,\ldots,d_{n+k},d_j)\\
&=(d_1,\ldots,d_{n+k},e)
\end{align*}
follows. Thus, $g_{n,2k+1}(d)$ is not an anti-atom of $\mathbf{G}_{n,2k+1}$ anymore.

For the verification of \ref{enum13} consider $a=(a_1,\ldots,a_n)\in\Sk(\mathbf{G}_n)\setminus\{0,1\}$ and let $j\in\{1,\ldots,n\}$ be such that $0<a_j<1$. By the infinity of $s^{-1}(a_j,j)$ there is $k>n/2$, $k\in\N$, such that $\pi_1(s(k))=a_j$ and $\pi_2(s(k))=j$. Then, setting $(a_1,\ldots,a_{2k+1})=g_{n,2k+1}(a)$, we have
\begin{align*}
g_{n,2k+2}(a)&=f_{2k+1}\circ g_{n,2k+1}(a)\\
&= f_{2k+1}(a_1,\ldots,a_{2k+1})\\
&= (a_1,\ldots,a_{2k+1},1)
\end{align*}
The last equality holds because $a_{\pi_2(s(k))}=a_j\in U_{a_j}=U_{a_{\pi_1(s(k))}}$.
\begin{claim}
The direct limit $\mathbf{G}$ of the directed family $\{\langle
\mathbf{G}_m,g_{m,n}\rangle \colon m,n\in\N, 1\leq m\leq n\}$ of p-semilattices is
countable and existentially closed.
\end{claim}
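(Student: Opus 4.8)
The plan is to verify separately that $\mathbf{G}$ is countable, that it is algebraically closed, and that it satisfies \ref{EC1}--\ref{EC5}; since $\mathcal{PCSL}$ is a variety the limit $\mathbf{G}$ is again a p-semilattice, and by Theorem \ref{thm Adler} the three facts together say precisely that $\mathbf{G}$ is existentially closed. Countability is immediate: each $\mathbf{G}_n=(\widehat{\mathbf{A}})^n$ is countable and $\mathbf{G}$ is a direct limit of countably many of them over $\N\setminus\{0\}$ (and it is clearly infinite). For algebraic closedness one invokes Theorem \ref{thm Schmid}: writing $g_{n,\infty}\colon\mathbf{G}_n\to\mathbf{G}$ for the canonical map into the limit, a finite subalgebra $\mathbf{F}\leq\mathbf{G}$ is generated by finitely many elements, all lying in $g_{n,\infty}(\mathbf{G}_n)$ for $n$ large enough; since $g_{n,\infty}$ is an embedding, $\mathbf{F}\leq g_{n,\infty}(\mathbf{G}_n)\cong\mathbf{G}_n=(\widehat{\mathbf{A}})^n$, which is of the form $\mathbf{2}^r\times(\widehat{\mathbf{A}})^s$ required there, with $r=0$ and $s=n$.

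Before attacking \ref{EC1}--\ref{EC5} I would record some reductions. Homomorphisms of p-semilattices preserve $^*$, so each $f_n$ carries $\Sk(\mathbf{G}_n)$ into $\Sk(\mathbf{G}_{n+1})$ and $\D(\mathbf{G}_n)$ into $\D(\mathbf{G}_{n+1})$, and an element of $\mathbf{G}$ is skeletal (resp.\ dense) exactly when it is the image of a skeletal (resp.\ dense) element of some $\mathbf{G}_n$; hence $\Sk(\mathbf{G})$ and $\D(\mathbf{G})$ are the directed unions of the images of the $\Sk(\mathbf{G}_n)$, resp.\ the $\D(\mathbf{G}_n)$. Now $\Sk(\widehat{\mathbf{A}})$ is a countable atomless boolean algebra (isomorphic to $\mathbf{A}$), so each $\Sk(\mathbf{G}_n)\cong(\Sk(\widehat{\mathbf{A}}))^n$ is as well, whence $\Sk(\mathbf{G})$ is a directed union of countable atomless boolean algebras along boolean embeddings and is therefore itself atomless; this is exactly \ref{EC1}. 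Axiom \ref{EC3} holds because the image in $\mathbf{G}$ of $e\in\D(\widehat{\mathbf{A}})=\D(\mathbf{G}_1)$ is a dense element $<1$. Finally, each of \ref{EC1}--\ref{EC5} has the shape ``$(\forall\ \text{parameters})(\exists\ \text{witness})(\text{hypothesis}\to\text{conclusion})$'', so, given parameters of $\mathbf{G}$ --- which can always be pulled back to a common $\mathbf{G}_n$ --- it suffices to exhibit one witness, in some $\mathbf{G}_N$ with $N\geq n$, whenever the hypothesis actually holds. This is how I would treat \ref{EC2}, \ref{EC4} and \ref{EC5}.

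For \ref{EC4}: let $d_1<d_2$ in $\D(\mathbf{G})$, realised in $\D(\mathbf{G}_n)=\{e,1\}^n$, and pick a coordinate $p$ with $\pi_p(d_1)=e$ and $\pi_p(d_2)=1$. By \ref{enum11}, applied to the anti-atom of $\D(\mathbf{G}_n)$ whose unique $e$-coordinate is $p$, there is a stage $m\geq n$ at which coordinate $p$ has been duplicated by the intervening even-index maps, so that $g_{n,m}(d_1)$ and $g_{n,m}(d_2)$ still disagree --- taking the values $e$ and $1$ --- in at least two coordinates; lowering the value of $g_{n,m}(d_2)$ to $e$ in one of those coordinates yields a dense element strictly between $d_1$ and $d_2$ in $\mathbf{G}$ (anything between two dense elements is dense, since $\D(\mathbf{G})$ is a filter). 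For \ref{EC5}: given $b\in\Sk(\mathbf{G})$ and $d_1\in\D(\mathbf{G})$ with $0<b<d_1$, realise them in $\mathbf{G}_n$ (so $b\in\Sk(\mathbf{G}_n)\setminus\{0,1\}$), apply \ref{enum13} to $b$ and $d_1$ to get $N\geq n$ with $\pi_N(g_{n,N}(b))=\pi_N(g_{n,N}(d_1))=1$, and let $d_2\in\mathbf{G}_N$ agree with $g_{n,N}(d_1)$ in every coordinate except the $N$-th, where $d_2$ takes the value $e$. Then $d_2$ is dense, $d_2<d_1$ in $\mathbf{G}$, and $d_1\w b^*=d_2\w b^*$ since the two sides can differ only in coordinate $N$, where $b^*$ is $0$; moreover $b\parallel d_2$, because $b$ strictly exceeds $d_2$ in coordinate $N$, while $d_2\leq b$ would put the dense element $d_2$ below the non-unit skeletal element $b$, contradicting that $\D(\mathbf{G})$ is a filter.

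The remaining axiom \ref{EC2} is the delicate one. Assume its hypothesis: $b_1\leq b_2$ skeletal, $d$ dense, $b_2<d<1$, $b_1^*\parallel d$, realised in some $\mathbf{G}_n$. Read coordinatewise, $b_1^*\parallel d$ forces $b_1\neq 0$ and the existence of a coordinate $p$ with $\pi_p(d)=e$ and $\pi_p(b_1)=0$ (hence $\pi_p(b_2)<e$, so $b_2<1$ automatically), while $b_2<d$ gives $\pi_i(b_1)\leq\pi_i(b_2)<e$ for every $i$ with $\pi_i(d)=e$. I would then pass to a large $\mathbf{G}_N$ in which $g_{n,N}(b_2)$ has picked up a coordinate $j_1$ of value $0$; such a coordinate is produced by an odd-index map $f_l$ applied to a coordinate on which the image of $b_2$ carries a skeletal value outside the ultrafilter $U_{\sigma_l(1)}$ of \eqref{eq_respecting_EC5}, and since $b_2\neq 1$ the renumbering of distinguished elements --- every element of each $\pi_i(\Sk(\mathbf{G}_n))\setminus\{0,1\}$ eventually becoming the distinguished element, the same device that secures \ref{enum13} --- guarantees that this happens. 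Now set $b_3\in\mathbf{G}_N$ equal to $1$ in every coordinate except $j_1$, where it is $0$. Then $b_3$ is skeletal, $b_3<1$, and $b_3>b_2$ in $\mathbf{G}$ (as $b_2<1$ already in a coordinate inherited from $\mathbf{G}_n$, necessarily distinct from the fresh coordinate $j_1$); furthermore $b_1^*\w b_3$ equals $1$ in coordinate $p$, where $d$ equals $e$, which makes $b_1^*\w b_3\parallel d$ and keeps its pseudocomplement below $d$, and in every other coordinate $i$ with $\pi_i(d)=e$ it equals $\pi_i(b_1)^*\neq 0$ because $\pi_i(b_1)<e$, so $b_1\bsup b_3^*=(b_1^*\w b_3)^*$ is \emph{strictly} below $d$. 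Thus $b_3$ is the required witness. The step I expect to demand the most care is precisely this bookkeeping: ensuring, through the exact form \eqref{eq_respecting_EC5} of the odd-index maps and the renumbering of distinguished elements, that one coordinate $p$ can simultaneously make $b_1^*\w b_3$ incomparable with $d$ and its pseudocomplement strictly (not merely weakly) below $d$, while a separately produced coordinate keeps $b_3$ below $1$. With \ref{EC1}--\ref{EC5} established, Theorem \ref{thm Adler} gives that $\mathbf{G}$ is existentially closed, which completes the proof.
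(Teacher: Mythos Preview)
Your proof is correct and, for countability, algebraic closedness, \ref{EC1}, \ref{EC3}, \ref{EC4} and \ref{EC5}, essentially identical to the paper's. The only genuine divergence is in \ref{EC2}. The paper handles \ref{EC2} via \ref{enum11}: once $d$ is no longer an anti-atom one may assume $\pi_1(b_1)=0$ and $\pi_1(d)=\pi_2(d)=e$, and then $b_3=(1,z_2,1,\ldots,1)$ with $\pi_2(b_2)<z_2<e$ skeletal (available by atomlessness of $\mathbf{A}$) already works inside the current $\mathbf{G}_n$. Your construction instead manufactures a fresh coordinate $j_1$ at which the image of $b_2$ reads $0$ and takes $b_3$ to be $0$ there and $1$ elsewhere. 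Both are valid; the paper's version stays within a single $\mathbf{G}_n$ and uses only \ref{enum11} plus the order-density of $\Sk(\widehat{\mathbf{A}})$, while yours avoids that density at the cost of an extra appeal to the odd-index maps.

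One point in your \ref{EC2} argument should be sharpened. The renumbering mechanism guarantees that every nonzero non-unit skeletal coordinate value eventually becomes \emph{distinguished}, hence lies \emph{inside} the chosen ultrafilter---the opposite of what you need for $f_l$ to append $0$ to $b_2$. The clean fix is to apply \ref{enum13} to $b_2^{*}\in\Sk(\mathbf{G}_n)\setminus\{0,1\}$ (which lies below a suitable dense element since $b_2\neq 0$): the resulting new coordinate, added by an odd-index map, carries $\pi_{j_1}(b_2^{*})=1$, hence $\pi_{j_1}(b_2)=0$, and the same odd-index map sends every dense element to $1$, so $\pi_{j_1}(d)=1$ as well---a fact your verification of $b_1\bsup b_3^{*}<d$ tacitly uses at coordinate $j_1$ but does not state.
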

\begin{proof}
$\mathbf{G}$ is countable since a countable union of countable sets
is countable. That $\mathbf{G}$ is algebraically closed follows from
Theorem \ref{thm Schmid}: Let $\mathbf{S}$ be a finite subalgebra of
$\mathbf{G}$.  By the construction of $\mathbf{G}$, there is
an $n\in\N$ such that that the carrier $S$ of $\mathbf{S}$ is a subset of $G_n$. Therefore, there is a subalgebra $\mathbf{S}'$ of
$\mathbf{G}$ isomorphic to $\mathbf{G}_n=(\widehat{\mathbf{A}})^n$
extending $\mathbf{S}$.

By Theorem \ref{thm Adler} it remains to show that
$\mathbf{G}$ satisfies \ref{EC1}--\ref{EC5}. \ref{EC1}---the order restricted to the skeletal elements is dense---is satisfied
as it is satisfied in $\mathbf{A}$. To prove axioms \ref{EC2}--\ref{EC5} we denote for $x\in\bigcup_{n=1}^{\infty}G_n$ by $[x]\in G$ the equivalence class of $x$.

For \ref{EC2} consider arbitrary $b_1,b_2\in \Sk(\mathbf{G})$ and $d\in
\D(\mathbf{G})$ such that $b_1\leq b_2<d$ and $b_1^*\parallel d$. There is
$n\in\N$ and  $x=(x_1,\ldots,x_n)$, $y=(y_1,\ldots,y_n)$, $w=(w_1,\ldots,w_n)\in G_n$
such that $b_1=[x]$, $b_2=[y]$, $d=[w]$, $\Sk(x)$, $\Sk(y)$, $\D(w)$, $x\leq y<w$ and
$x^*\parallel w$. We first assume that $w$ is not an anti-atom of
$\mathbf{G}_n$. Then, without loss of generality, we can assume
$x_1=0$, $w_1=w_2=e$. Then put $z=(1,z_2,1,\ldots,1)$ with $y_2<z_2<e$
to obtain $y<z$, $z\parallel w$, $x^*\w z\parallel w$ and $x\bsup z^*< w$. The last inequality follows from $x_2\leq y_2<z_2$, which implies $0<x_2^*\wedge z_2$, thus
$e>(x_2^*\wedge z_2)^*=x_2\bsup z_2^*$.
Putting $b_3=[z]$ yields the element requested in \ref{EC2}.

If $w$ is an anti-atom there is by  \ref{enum11} an
$l\in\N$ such that $g_{n,n+l}(w)$ is not an
anti-atom of $\mathbf{G}_{n+l}$ anymore. For
$x':=g_{n,n+l}(x)$,
$y':=g_{n,n+l}(y)$ and $w':=g_{n,n+l}(w)$ we find as above
$z\in G_{n+l}$ such that $y'<z$, $x'^*\w z\parallel w'$ and $x'\bsup z^*< w'$.
Putting $b_3=[z]$ then yields the element requested in \ref{EC2} because $[x]=[x']$, $[y]=[y']$, $[w]=[w']$.

For \ref{EC3} consider an arbitrary $b\in \Sk(\mathbf{G})$ with $b<1$. There is
$n\in\N$ and  $x=(x_1,\ldots,x_n)\in G_n$ such that $b=[x]$, $\Sk(x)$ and $x<1$. By \ref{enum13} there is $k\in\N$ such that $\pi_{n+k}(g_{n,n+k}(x^*))=1$. Then, with $(1,\ldots,1,e)\in G_{n+k}$, we can choose $d=[(1,\ldots,1,e)]$ as the element requested in \ref{EC3}.

For \ref{EC4} consider arbitrary $d_1,d_2\in\D(\mathbf{G})$
such that $d_1<d_2$. There is $n\in\N$ and $x,y\in G_n$
such that $d_1=[x]$, $d_2=[y]$. There are $l\in\N$ and
$z\in\D(\mathbf{G}_{n+l})$ such that $g_{n,n+l}(x)<z<g_{n,n+l}(y)$: We have
$x=\bigwedge_{j\in J_x}a_j$, $y=\bigwedge_{j\in J_y}a_j$ for subsets
$J_y\subsetneq J_x\subseteq\{1,\ldots,n\}$, $a_j$ being an anti-atom of $\D(\mathbf{G}_n)$ for $j\in J_x$. For $j_0\in J_x\setminus J_y$ there is by  \ref{enum11}
a least $l\in\N$ such
that $g_{n,n+l}(a_{j_0})$ is not an anti-atom of
$\mathbf{G}_{n+l}$ anymore, that is, there is are anti-atoms  $u_1,u_2\in G_{n+l}$
with $g_{n,n+l}(a_{j_0})=u_1\wedge u_2$ and $g_{n,n+l}(a_{j})\parallel u_i$ for all $j\in J_y$, $i=1,2$. Because $\D(\mathbf{G}_{n+l})$ is boolean, that is, a $\wedge$-reduct of a boolean algebra, we obtain
\begin{multline*}g_{n,n+l}(x)=\bigwedge_{j\in J_x\setminus\{j_0\}}g_{n,n+l}(a_{j})\wedge u_1\wedge u_2<\bigwedge_{j\in J_x\setminus\{j_0\}}g_{n,n+l}(a_{j})\wedge u_1\\\leq\bigwedge_{j\in J_y}g_{n,n+l}(a_{j})\wedge u_1<\bigwedge_{j\in J_y}g_{n,n+l}(a_{j}),
\end{multline*}
which implies
\begin{equation*}\label{boolean_inequality}
g_{n,n+l}(x)<g_{n,n+l}(y)\wedge u_1<g_{n,n+l}(y).
\end{equation*}
We have
\[d_1=[x]=[g_{n,n+l}(x)]<[g_{n,n+l}(y)\wedge u_1]<[g_{n,n+l}(y)]=[y]=d_2, \]
and we can choose $d_3=[g_{n,n+l}(y)\wedge u_1]$ as the element requested in \ref{EC4}.

For \ref{EC5} consider arbitrary $b\in \Sk(\mathbf{G})$ and $d_1\in
\D(\mathbf{G})$ such that $0<b<d_1$. There is
$n\in\N$ and $x=(x_1,\ldots,x_n)$, $y=(y_1,\ldots,y_n)\in G_n$
such that $b=[x]$, $d_1=[y]$, $\Sk(x)$, $\D(y)$, $0<x<y$.  Let us assume that there is no $z\in\D(\Gn_n)$ such that $z<y$,
$x||z$ and $x^*\w y=x^*\w z$, since otherwise we put $d_2=[z]$.

By  \ref{enum13} there is an $l\in\N$ such that $\pi_{n+l}(g_{n,n+l}(x))=\pi_{n+l}(g_{n,n+l}(y))=1$. Defining $z\in G_{n+l}$ by putting $z_j=\pi_j(g_{n,n+l}(y))$ for $1\leq j\leq n+l-1$ and $z_{n+l}=e$ we then can choose $d_2=[z]$ as the element requested in \ref{EC5}.

\end{proof}




\end{document}